\documentclass[11pt,reqno]{amsart}
\usepackage[T1]{fontenc}
\usepackage[utf8]{inputenc}
\usepackage[english]{babel}
\usepackage{amssymb,amsthm,amsmath}
\usepackage{booktabs}
\usepackage[margin=1in]{geometry}
\usepackage[colorlinks=true,linkcolor=blue,citecolor=blue,urlcolor=blue]{hyperref}

\newtheorem{theorem}{Theorem}[section]
\newtheorem{proposition}[theorem]{Proposition}
\newtheorem{lemma}[theorem]{Lemma}
\newtheorem{corollary}[theorem]{Corollary}
\theoremstyle{remark}\newtheorem*{remark}{Remark}
\numberwithin{equation}{section}
\let\a\alpha \let\b\beta  \let\g\gamma
\newcommand{\He}{\mathrm{He}}\newcommand{\E}{\mathbb E}\newcommand{\nab}{\nabla}
\newcommand{\R}{\mathbb R}\newcommand{\dd}{\operatorname{d}}\newcommand{\sgn}{\operatorname{sgn}}

\title[Appell Polynomials in Shifted Asymptotic Expansions]
{Appell Polynomials in Shifted Asymptotic Expansions:
the Mills ratio, Hermite polynomials,
and Stieltjes bounds}

\author{Tomislav~Buri\'c}
\address{University of Zagreb, Faculty of Electrical Engineering and Computing,
Unska 3, 10000 Zagreb, Croatia}
\email{tomislav.buric@fer.hr}
\thanks{Corresponding author: Tomislav Buri\'c.}

\author{Neven~Elezovi\'c}
\address{University of Zagreb, Faculty of Electrical Engineering and Computing,
Unska 3, 10000 Zagreb, Croatia}
\email{neven@element.hr}

\author{Lenka~Mihokovi\'c}
\address{University of Zagreb, Faculty of Electrical Engineering and Computing,
Unska 3, 10000 Zagreb, Croatia}
\email{lenka.mihokovic@fer.hr}

\subjclass[2020]{Primary 41A60, 33C45; Secondary 30E05, 41A21, 60E05}
\keywords{Appell polynomials, asymptotic expansion, Mills ratio, Hermite
polynomials, Stieltjes function, Pad\'e approximant, moment sequence}

\date{\today}
\begin{document}

\begin{abstract}
A theorem of Buri\'c, Elezovi\'c and Vuk\v si\'c states that translating the argument
in an asymptotic expansion,
$f(x)\sim\sum(-1)^na_nx^{-n-1}$,  to $f(x+t)$, replaces the constant coefficients
$a_n$ by the Appell polynomials $R_n(t)$ generated by $(a_n)$. Their motivating
examples came from the gamma and polygamma functions, where Bernoulli polynomials
occur. We extend this construction beyond that setting and identify the
Borel--Laplace representation
$L_A(x)=\int_0^\infty e^{-xs}A(-s)\,\dd s$, whenever the integral exists. For the
Gaussian kernel $A(-s)=e^{-s^2/2}$, this representation gives the
\emph{Mills--Hermite} expansion
$M(x+t)\sim\sum(-1)^n\He_n(t)x^{-n-1}$, extending the classical scalar expansion.
We establish an explicit remainder estimate and derive finite-difference formulas
that remain in the Hermite polynomial algebra. As an application, we obtain a
large-threshold expansion of the non-central Gaussian tail with explicit polynomial
dependence on the mean. If $A(-s)$ is itself the Laplace transform of a positive
measure, then the coefficients form a Stieltjes moment sequence. The associated
Pad\'e convergents give a systematic hierarchy of two-sided bounds in which
successive lower and upper approximants incorporate the moments one at a time.
\end{abstract}

\maketitle

\section{Introduction}
In a sequence of papers on the asymptotics of the gamma function and its relatives
\cite{BE034,BEV057}, the
Bernoulli polynomials and, more precisely, divided differences of Bernoulli
polynomials, emerged as the natural coefficients. A striking instance is the digamma
function: from $\psi(x)\sim\log x-\frac1{2x}-\sum_{k\ge1}\frac{B_{2k}}{2k}x^{-2k}$ one
has, for every real $t$, the \emph{shifted} expansion
\[
\psi(x+t)\sim\log x+\sum_{n\ge1}(-1)^{n+1}\frac{B_n(t)}{n}\,x^{-n},
\]
with the Bernoulli \emph{polynomials} $B_n(t)$ in place of the numbers $B_n$. Buri\'c,
Elezovi\'c and Vuk\v si\'c \cite{BEV057} explained this as an instance of a general
principle: a shift of the argument always converts the coefficients of an asymptotic
series into Appell polynomials. We recall their result in \S\ref{sec:framework}.

Those papers analysed gamma-related functions, for which the relevant Appell
sequence is the Bernoulli sequence. Euler polynomials also lead to elementary
combinations of digamma functions and are not considered here. We study two
different classes in which the same construction gives further results. The
exponential generating function $A(z)=\sum a_nz^n/n!$ provides the Borel kernel
$A(-s)$ of the alternating asymptotic series. Whenever the corresponding Laplace
integral exists, one has
\[
 f(x)=L_A(x):=\int_0^\infty e^{-xs}A(-s)\,\dd s
 \sim\sum_{n\ge0}(-1)^n\frac{a_n}{x^{n+1}}.
\]
We consider two classes of kernels:
\begin{itemize}
\item the \emph{Gaussian} kernel $A(-s)=e^{-s^2/2}$, whose Appell sequence is Hermite and
whose Laplace transform is the Mills ratio --- the reciprocal of the Gaussian hazard
(\S\ref{sec:mills}--\ref{sec:apps});
\item the \emph{Stieltjes} kernels, $A(-s)=\int e^{-\sigma s}\,\dd\mu(\sigma)$ for a
positive measure $\mu$, whose coefficients are the moments of $\mu$ and whose Laplace
transform is a Stieltjes function (\S\ref{sec:stieltjes}).
\end{itemize}
For the Gaussian kernel, our main result is the shifted Mills expansion
\eqref{eq:millsher}. It contains the full Hermite polynomials $\He_n(t)$ and includes
an explicit remainder estimate. The shift parameter can then be treated
algebraically, which gives finite-difference formulas and a large-threshold expansion
of the non-central Gaussian tail. For Stieltjes kernels, positivity of $\mu$ makes
$f$ a Stieltjes function. Its Pad\'e convergents provide lower and upper bounds
determined by successive moments of the generating sequence.

The shift may also depend on the asymptotic variable. The proof of
Theorem~\ref{thm:shift} is uniform for $t$ in compact sets and therefore applies to
every bounded family $t_x$, even if $t_x$ has no limit. This is formulated in
Corollary~\ref{cor:moving}. Such shifts occur naturally in De~Moivre's formula for
the mean absolute deviation of a binomial variable. The formula involves the lattice
point $\lceil Np\rceil$, whose displacement from the mean $Np$ is bounded but need
not converge. The resulting expansion is expressed, at every order, by Bernoulli
polynomials evaluated at this moving shift \cite{ElezovicMAD}.

\paragraph{Relation with earlier work.} The algebra of Appell sequences is part of the umbral
calculus \cite{Roman}, while the passage from a local expansion of a Laplace kernel to
an asymptotic expansion is Watson's lemma \cite{Olver}. L\'opez and Temme
\cite{LopezTemme} use the
same structural idea, with Hermite polynomials as coefficients, for generalized
Bernoulli, Euler, Bessel and Buchholz polynomials. The Mills ratio is not among their examples. The connection between the Mills ratio and
the Hermite polynomials is itself classical --- it underlies Jacobi's representation
$M^{(n)}=P_nM-Q_n$ of the derivatives --- and Luci\'c \cite{Lucic} starts from the
Laplace representation of Lemma~\ref{lem:lap} and, combining it with the Hermite
expansion of monomials, recovers those derivative polynomials directly. The scalar
asymptotic series and its Laplace continued fraction are likewise standard
\cite{DLMF,GasullUtzet}. The contribution of the present paper is to place the shift
theorem in a Borel--Laplace framework and to develop its consequences for the Mills
ratio and for Stieltjes moment sequences. In the Gaussian case we obtain
\eqref{eq:millsher}, with the full polynomials $\He_n(t)$ and an explicit remainder
estimate, and use it for finite differences and the non-central tail expansion. In
the Stieltjes case we organise the classical Pad\'e bracketing
\cite{Baker,Akhiezer,Wall} directly in terms of the Appell-generating moment sequence.
Thus, the same shift mechanism gives a common treatment of two classes that are
usually considered separately.

\section{The shift theorem and finite differences}\label{sec:framework}
A sequence of polynomials $(R_n)$ is an \emph{Appell sequence} if there is a formal
power series such that
\[
A(z)=\sum_{k\ge0}a_k\frac{z^k}{k!},\qquad a_0\ne0,\qquad
A(z)e^{tz}=\sum_{n\ge0}R_n(t)\frac{z^n}{n!}.
\]
Then
\begin{equation}\label{eq:appell}
R_n(t)=\sum_{k=0}^n\binom nk a_k\,t^{n-k},\qquad R_n'(t)=nR_{n-1}(t),\qquad R_n(0)=a_n,
\end{equation}
and the differential recurrence, together with $R_n(0)=a_n$, characterises the
sequence. The Bernoulli, probabilists' Hermite and Euler polynomials correspond,
respectively, to
\[
A(z)=\frac{z}{e^z-1},\qquad A(z)=e^{-z^2/2},\qquad
A(z)=\frac{2}{e^z+1}.
\]

\begin{theorem}[\cite{BEV057}]\label{thm:shift}
If
\[
f(x)\sim\sum_{n\ge0}(-1)^n\frac{a_n}{x^{n+1}}
\qquad (x\to\infty),
\]
then, for every fixed $t\in\R$,
\begin{equation}\label{eq:shift}
f(x+t)\sim\sum_{n\ge0}(-1)^n\frac{R_n(t)}{x^{n+1}},
\end{equation}
where $(R_n)$ is the Appell sequence generated by $(a_n)$.
\end{theorem}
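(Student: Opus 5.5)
The plan is to work directly with the definition of an asymptotic expansion in Poincaré's sense. Fix $N\ge0$ and write
\[
f(y)=\sum_{k=0}^{N}(-1)^k\frac{a_k}{y^{k+1}}+\rho_N(y),\qquad \rho_N(y)=O(y^{-N-2})\quad (y\to\infty).
\]
Substitute $y=x+t$. For $x>2|t|$ we have $x+t\ge x/2$, so $\rho_N(x+t)=O(x^{-N-2})$. The constant in this bound depends only on the constant in $\rho_N$ and on a bound for $|t|$. This is exactly what gives uniformity for $t$ in compact sets, as claimed later for Corollary~\ref{cor:moving}. It therefore remains to expand each term $(x+t)^{-k-1}$ in powers of $x^{-1}$.

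For $|t|<x$ the binomial series gives
\[
(x+t)^{-k-1}=x^{-k-1}\sum_{j\ge0}\binom{-k-1}{j}\frac{t^j}{x^j}
=\sum_{j\ge0}(-1)^j\binom{k+j}{j}\frac{t^j}{x^{k+j+1}}.
\]
I truncate this at $j=N-k$. The tail is bounded by $C_{k,N}|t|^{N-k+1}x^{-N-2}$ uniformly for $x\ge2|t|$, since the remaining geometric-type series converges with ratio at most $1/2$. Summing over $k\le N$ then gives
\[
f(x+t)=\sum_{k=0}^N\sum_{j=0}^{N-k}(-1)^{k+j}\binom{k+j}{j}a_k\,\frac{t^j}{x^{k+j+1}}+O(x^{-N-2}).
\]

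Next I regroup by $n=k+j$. The coefficient of $(-1)^nx^{-n-1}$ is $\sum_{k=0}^n\binom nk a_k t^{n-k}$, because $\binom{n}{n-k}=\binom nk$. By \eqref{eq:appell} this is $R_n(t)$, so
\[
f(x+t)=\sum_{n=0}^N(-1)^n\frac{R_n(t)}{x^{n+1}}+O(x^{-N-2}).
\]
Since $N$ is arbitrary, this is \eqref{eq:shift}. As a formal cross-check, $\sum(-1)^na_nx^{-n-1}$ corresponds under Borel transform to $A(-s)$, and the shift multiplies the kernel by $e^{-ts}$. The product $A(-s)e^{-ts}$ generates exactly $R_n(t)$ with the same signs.

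The only step needing care is the error bookkeeping. Triangular truncation ($j\le N-k$) must lose nothing of order below $x^{-N-1}$, and all constants must stay independent of $x$, and of $t$ on compacts. The choice $x\ge2|t|$ is what I would use to bound both the binomial tails and $\rho_N(x+t)$ uniformly. Beyond that, the argument is a finite rearrangement and should present no real obstacle.
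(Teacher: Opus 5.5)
Your proof is correct and follows essentially the same route as the paper. You truncate the expansion of $f$ at order $N$, expand each $(x+t)^{-k-1}$ binomially up to total order $N$, and regroup by $n=k+j$ to obtain $R_n(t)=\sum_k\binom nk a_kt^{n-k}$. The differences are only cosmetic: you use the remainder $O(y^{-N-2})$, which follows from the expansion at order $N+1$, in place of $o(y^{-N-1})$, and you build in uniformity for $|t|\le T$ here, whereas the paper postpones it to Corollary~\ref{cor:moving}.
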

\begin{proof}
Fix $N$. By the definition of the asymptotic expansion, for fixed $t$,
\[
f(x+t)=\sum_{k=0}^{N}(-1)^k a_k(x+t)^{-k-1}+o\bigl(x^{-N-1}\bigr)\qquad(x\to\infty),
\]
since $(x+t)^{-N-1}=x^{-N-1}\bigl(1+o(1)\bigr)$ for fixed $t$. Each explicit term has the
finite Taylor expansion
\[
(x+t)^{-k-1}=\sum_{j=0}^{N-k}(-1)^j\binom{j+k}{j}t^j x^{-k-j-1}+O\bigl(x^{-N-2}\bigr).
\]
Substituting and collecting the coefficient of $x^{-n-1}$ for $n\le N$ gives
\[
(-1)^n\sum_{k=0}^n\binom nk a_kt^{n-k}=(-1)^nR_n(t).
\]
The finitely many
$O(x^{-N-2})$ tails and the $o(x^{-N-1})$ remainder combine to $o(x^{-N-1})$. As $N$ is
arbitrary, this proves \eqref{eq:shift}.
\end{proof}

\begin{remark}
The formal series on the right of \eqref{eq:shift} satisfies
$\partial_xF=\partial_tF$, because $R_n'=nR_{n-1}$. This is a useful consistency
test, but it is not a converse for asymptotic expansions: an exponentially small
(beyond-all-orders) term depending separately on $x$ and $t$ is invisible to every
algebraic order.
Only if an actual $C^1$ function on a connected domain satisfies
$\partial_xF=\partial_tF$ exactly can one conclude that it is a function of $x+t$.
\end{remark}

\subsection{A moving shift and De Moivre's formula}\label{sec:moving}
Theorem~\ref{thm:shift} is stated for a fixed $t$. Since the shift enters the proof
only through bounds that are uniform on compact sets, the same argument also covers
bounded shifts that depend on $x$.

\begin{corollary}[Moving shifts]\label{cor:moving}
The expansion \eqref{eq:shift} holds uniformly for $t$ in compact subsets of $\R$: for every
$T>0$ and every $N$,
\[
f(x+t)=\sum_{n=0}^{N}(-1)^n\frac{R_n(t)}{x^{n+1}}+o\bigl(x^{-N-1}\bigr)
\qquad(x\to\infty),
\]
the $o$ being uniform for $|t|\le T$. Consequently, if $(t_x)_{x>0}$ is \emph{any} family with
$|t_x|\le T$, then
\begin{equation}\label{eq:moving}
f(x+t_x)=\sum_{n=0}^{N}(-1)^n\frac{R_n(t_x)}{x^{n+1}}+o\bigl(x^{-N-1}\bigr),
\end{equation}
even when $t_x$ has no limit as $x\to\infty$.
\end{corollary}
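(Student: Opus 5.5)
The plan is to rerun the proof of Theorem~\ref{thm:shift} and check that every error term is uniform in $|t|\le T$. There is one point to watch. The hypothesis only gives $f(y)=\sum_{k\le N}(-1)^ka_ky^{-k-1}+r_N(y)$ with $r_N(y)=o(y^{-N-1})$ as $y\to\infty$, so there is no quantitative rate. We therefore first turn this into a uniform statement. For $|t|\le T$ and $x\ge 2T$ we have $x+t\ge x/2$, so
\[
\sup_{|t|\le T}|r_N(x+t)|\le \sup_{y\ge x-T}|r_N(y)|.
\]
Write $\varepsilon(y)=\sup_{u\ge y}|u^{N+1}r_N(u)|$. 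Then $\varepsilon(y)\to0$ because $r_N(u)=o(u^{-N-1})$, and
\[
\sup_{|t|\le T}|r_N(x+t)|\le \varepsilon(x-T)\,(x/2)^{-N-1}.
\]
This is $o(x^{-N-1})$ uniformly in $t$. This monotone-envelope step is the only point where uniformity needs an argument, and it is the step I expect to require care. It uses nothing beyond the definition of $o$, but one must note that $r_N$ is evaluated only at arguments $\ge x-T\to\infty$.

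Next, for the explicit terms, we expand each $(x+t)^{-k-1}=x^{-k-1}(1+t/x)^{-k-1}$ by Taylor's formula with remainder. For $|t|/x\le 1/2$, the remainder after the terms up to $x^{-N-1}$ is bounded by $C_{k,N}T^{N-k+1}x^{-N-2}$, with a constant depending only on $k,N,T$. The bound comes from the Lagrange form, since the derivatives of $(1+u)^{-k-1}$ are bounded on $|u|\le1/2$. Hence each of these finitely many tails is $O(x^{-N-2})$ uniformly for $|t|\le T$. Collecting the coefficients of $x^{-n-1}$ gives $(-1)^nR_n(t)$ exactly as in the proof of Theorem~\ref{thm:shift}. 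This yields the uniform expansion with remainder
\[
\sup_{|t|\le T}\Bigl|f(x+t)-\sum_{n\le N}(-1)^nR_n(t)x^{-n-1}\Bigr|=o(x^{-N-1}).
\]

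Finally, \eqref{eq:moving} is immediate. For any family with $|t_x|\le T$, the error at $t=t_x$ is bounded by the supremum above, which is $o(x^{-N-1})$ regardless of whether $t_x$ converges. The polynomials $R_n(t_x)$ stay bounded, but that is not even needed for the statement.
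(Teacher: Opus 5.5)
Your proposal is correct and follows essentially the same route as the paper: rerun the proof of Theorem~\ref{thm:shift}, check that the $o$-remainder of $f$ is evaluated only at $x+t\ge x-T\to\infty$ and that the Taylor remainders of $(x+t)^{-k-1}$ have constants depending only on $N$ and $T$ for $x\ge 2T$, then evaluate the uniform bound along the family $t_x$. Your monotone envelope $\varepsilon(y)=\sup_{u\ge y}|u^{N+1}r_N(u)|$ is a more explicit version of the paper's one-line remark that $y=x+t$ tends to infinity uniformly.
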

\begin{proof}
Both error terms in the proof of Theorem~\ref{thm:shift} are uniform for $|t|\le T$. The
expansion of $f$ is used at $y=x+t\ge x-T$, which tends to infinity uniformly; and in the Taylor
expansion of $(x+t)^{-k-1}$ the remainder $O(x^{-N-2})$ has a constant depending only on $N$ and
$T$, for $x\ge2T$. Evaluating the uniform statement along the family gives \eqref{eq:moving}.
\end{proof}

If $t_x$ oscillates, the coefficients $R_n(t_x)$ need not converge, but the
expansion remains valid to every order because the error is uniform.

The following example is based on one of the earliest formulas in probability.
Let $X\sim\mathrm{Bin}(N,p)$, $q=1-p$. De~Moivre's closed form for the mean
absolute deviation,
\[
\E\,|X-Np|=2\nu q\binom N\nu p^{\nu}q^{N-\nu},\qquad \nu=\lceil Np\rceil,
\]
evaluates the binomial mass at the first lattice point \emph{above} the mean. The displacement
\[
h_N=\nu-Np=\lceil Np\rceil-Np\in[0,1)
\]
is bounded but does not converge --- for irrational $p$ it is equidistributed in $[0,1)$ --- and it
is exactly a moving shift in the sense of \eqref{eq:moving}. Writing the binomial mass as a
quotient of gamma functions and applying the shifted Stirling series, that is, the log-gamma
instance of the shift principle with the Bernoulli sequence, one obtains \cite{ElezovicMAD}
\begin{equation}\label{eq:mad}
\begin{aligned}
\E\,|X-Np|
&\sim\sqrt{\frac{2Npq}{\pi}}\,
\exp\Bigl(\sum_{k\ge1}\frac{a_k(h_N;p)}{N^{k}}\Bigr),\\
a_k(h;p)
&=\frac{(-1)^{k+1}}{k(k+1)}
\Bigl[B_{k+1}\\
&\hspace{25mm}
-\bigl(p^{-k}+(-1)^{k+1}q^{-k}\bigr)B_{k+1}(h)\Bigr].
\end{aligned}
\end{equation}
Every coefficient is a Bernoulli polynomial at the moving shift; no additional
elementary term occurs.

This property is related to the Appell structure of the Bernoulli sequence. Among
all Appell sequences, the Bernoulli sequence is characterised by the difference
identity
\begin{equation}\label{eq:charB}
\begin{aligned}
R_n(t+1)-R_n(t)=n\,t^{n-1}\quad(n\ge1)
&\iff A(z)\bigl(e^{z}-1\bigr)=z\\
&\iff A(z)=\frac{z}{e^{z}-1},
\end{aligned}
\end{equation}
as one reads off $A(z)e^{tz}(e^z-1)=z\,e^{tz}$; in the same way the Euler sequence is the unique
Appell sequence with $R_n(t+1)+R_n(t)=2t^n$, that is, $A(z)(e^z+1)=2$. In the binomial
computation the three gamma factors carry the shifts $1$, $h+1$ and $1-h$, and it is
\eqref{eq:charB}, together with the reflection $B_m(1-t)=(-1)^mB_m(t)$, that reduces them to the
single polynomial $B_{k+1}(h)$, together with an elementary term in $h^{k}$. This
term is cancelled, for every $k$, by the logarithm of the De~Moivre prefactor
$\nu=Np+h$. The result is the Appell series \eqref{eq:mad}. It remains to determine
whether this cancellation follows from a general mechanism for Appell sequences
satisfying an identity of the type \eqref{eq:charB}.

\subsection{Borel--Laplace realisation}
The series $A(z)$ is an algebraic datum; it need not converge, and even when it
converges locally the associated Laplace integral need not exist. When it does,
the signs in the asymptotic expansion determine which kernel must be used.

\begin{proposition}[Laplace--Appell correspondence]\label{prop:laplace-appell}
Suppose the Borel kernel $A(-s)$ is defined and infinitely differentiable on
$[0,\infty)$, satisfies $|A(-s)|\le Ce^{bs}$ for some constants $C>0$, $b\ge0$, and has
the asymptotic Taylor behaviour
\[
A(-s)\sim\sum_{n\ge0}(-1)^na_n\frac{s^n}{n!}\qquad(s\to0^+).
\]
Then for every fixed $t\in\R$ the integral
\[
L_A(x+t):=\int_0^\infty e^{-xs}A(-s)e^{-ts}\,\dd s
\]
converges absolutely for $x>b-t$, and
\begin{equation}\label{eq:laplace}
L_A(x+t)\sim\sum_{n\ge0}(-1)^n\frac{R_n(t)}{x^{n+1}}\qquad(x\to\infty).
\end{equation}
The growth bound is only a sufficient condition; for a rapidly decaying kernel such as
the Gaussian $A(-s)=e^{-s^2/2}$ the integral converges for all real $x$.
\end{proposition}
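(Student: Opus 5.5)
The plan is to prove the unshifted case $t=0$ by Watson's lemma and then obtain \eqref{eq:laplace} from Theorem~\ref{thm:shift}. A direct route through Watson's lemma applied to the modified kernel $A(-s)e^{-ts}$ is also available, and it serves as a consistency check.

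\emph{Convergence.} From $|A(-s)|\le Ce^{bs}$ we get
\[
|e^{-xs}A(-s)e^{-ts}|\le Ce^{-(x+t-b)s}.
\]
This majorant is integrable on $[0,\infty)$ exactly when $x+t>b$. Continuity of $A(-s)$ on $[0,\infty)$ handles measurability and local integrability. Hence $L_A(x+t)$ converges absolutely for $x>b-t$. It also follows that $L_A(x+t)$ coincides with $L_A$ evaluated at the point $y=x+t$, where
\[
L_A(y)=\int_0^\infty e^{-ys}A(-s)\,\dd s .
\]

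\emph{Watson's lemma for $t=0$.} Fix $N$. By the asymptotic Taylor hypothesis, choose $\delta\in(0,1]$ and $K$ such that
\[
\Bigl|A(-s)-\sum_{n\le N}(-1)^na_ns^n/n!\Bigr|\le Ks^{N+1}\qquad\text{on }[0,\delta].
\]
(This is Taylor's theorem with remainder, valid since $A(-s)$ is smooth up to $0$; the derivatives at $0$ are $(-1)^na_n$.) Split the integral at $\delta$.

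\begin{itemize}
\item On $[0,\delta]$, replace $A(-s)$ by its Taylor polynomial. The exact moments satisfy $\int_0^\infty e^{-ys}s^n\,\dd s=n!/y^{n+1}$, and the truncation of these to $[0,\delta]$ costs only $O(e^{-\delta y/2})$. The remainder contributes at most $K\int_0^\infty s^{N+1}e^{-ys}\,\dd s=O(y^{-N-2})$.
\item On $[\delta,\infty)$, for $y>b+1$, the integral is bounded by $Ce^{-(y-b)\delta}/(y-b)$. This is exponentially small.
\end{itemize}

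Together these give $L_A(y)\sim\sum(-1)^na_ny^{-n-1}$.

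\emph{Shift.} Apply Theorem~\ref{thm:shift} to $f=L_A$ with $y=x+t$, fixed $t$. This gives \eqref{eq:laplace} with the Appell polynomials $R_n(t)$.

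\emph{Alternative check.} The kernel $A(-s)e^{-ts}$ has Taylor coefficients
\[
\sum_k\binom nk(-1)^ka_k(-t)^{n-k}=(-1)^nR_n(t).
\]
Watson's lemma in the variable $x$ then gives the same result directly. This matches the generating-function identity $A(-s)e^{-ts}=\sum R_n(t)(-s)^n/n!$.

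\emph{Main obstacle.} The only delicate point is making the Watson step rigorous using only the stated hypotheses. In particular, the remainder estimate near $0$ must be uniform on a fixed interval, which the smoothness on the closed half-line provides. The tail must be controlled using the exponential bound once $y>b$.

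\emph{Gaussian kernel.} For $A(-s)=e^{-s^2/2}$, note that $e^{-s^2/2}e^{-(x+t)s}$ is integrable for every real $x+t$. So convergence holds for all $x$, and the asymptotic argument is unchanged.
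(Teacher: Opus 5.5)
Your proof is correct, and your ``alternative check'' is exactly the paper's proof. There the shift is absorbed into the kernel: the Appell addition formula gives $A(-s)e^{-ts}\sim\sum_{n\ge0}(-1)^nR_n(t)s^n/n!$ as $s\to0^+$, the tail is controlled by $|A(-s)e^{-ts}|\le Ce^{(b-t)s}$, and Watson's lemma is applied once in $x$, with no appeal to Theorem~\ref{thm:shift}. Your main route is genuinely different. You use the Laplace structure only to get the unshifted expansion of $L_A(y)$, proving Watson's lemma by hand where the paper cites it. You then note that $L_A(x+t)$ is just $L_A(y)$ at $y=x+t$, and let the purely asymptotic Theorem~\ref{thm:shift} turn $a_n$ into $R_n(t)$. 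This is more modular, and it shows that the proposition as stated is Watson's lemma plus the shift theorem. The paper's route buys two things in return. First, it derives the shift phenomenon independently for Laplace-representable $f$ and shows its Borel-side meaning: a shift by $t$ is multiplication of the kernel by $e^{-ts}$, and the Appell polynomials are the Taylor coefficients of the modified kernel. Second, because it works with the shifted kernel itself, it is what Theorem~\ref{thm:millsher} builds on. There, bounding the $N$-th derivative of $g(s)=e^{-ts-s^2/2}$ gives the explicit, non-asymptotic remainder bound $M_N(t)/x^{N+1}$ valid for every $x>0$. Passing through Theorem~\ref{thm:shift}, by contrast, yields only an $o(x^{-N-1})$ error with no explicit constant.
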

\begin{proof}
Multiplying the asymptotic expansion of $A(-s)$ at $s=0$ by $e^{-ts}$ and collecting
powers gives, by the Appell addition formula,
\[
A(-s)e^{-ts}\sim\sum_{n\ge0}(-1)^nR_n(t)\frac{s^n}{n!}\qquad(s\to0^+),
\]
while the bound $|A(-s)e^{-ts}|\le Ce^{(b-t)s}$ controls the tail. Watson's lemma,
together with $\int_0^\infty e^{-xs}s^n\,\dd s=n!/x^{n+1}$, gives \eqref{eq:laplace}.
\end{proof}

\subsection{Finite differences in the shift}
The polynomial structure of the Appell coefficients $R_n$ is useful for
\emph{finite differences}, in which $f$ is evaluated at points whose separation does
not scale with $x$. These differences are used below for discrete convexity and for
increments in the shift. Differentiation in the shift gives no additional operation:
since
$\partial_tf(x+t)=\partial_xf(x+t)=f'(x+t)$, a $t$-derivative only reproduces an
$x$-derivative. We centre the samples at $x+\a$ and use the half-spread $\delta$.
The symmetric specialisation of the intrinsic
variables used in \cite{BEV057} is
\begin{equation}\label{eq:albe}
\a\quad(\text{midpoint}),\qquad \b=-\delta^2\quad(\text{spread}).
\end{equation}
The dependence is even in $\delta$, hence polynomial in $\b$; for fixed $\b$ the
coefficients satisfy the Appell lowering relation in the midpoint $\a$.

\begin{proposition}[First and second symmetric differences]\label{prop:diff}
Fix $\a\in\R$ and $\delta\ge0$, both independent of $x$, and put $\b=-\delta^2\le0$. For
the symmetric pair $x+\a\pm\delta$ define
\[
\nab_m(\a,\b):=
\frac{R_{m+1}(\a+\delta)-R_{m+1}(\a-\delta)}
     {2(m+1)\delta},
\]
with the value at $\delta=0$ understood by continuity. Then
\begin{equation}\label{eq:dd}
\frac{f(x+\a+\delta)-f(x+\a-\delta)}{2\delta}\sim
 -\sum_{m\ge0}(-1)^m\frac{(m+1)\,\nab_m(\a,\b)}{x^{m+2}},
\end{equation}
where
\begin{equation}\label{eq:nabla}
\nab_m(\a,\b)=\sum_{i=0}^{\lfloor m/2\rfloor}
\frac1{2i+1}\binom m{2i}(-\b)^iR_{m-2i}(\a).
\end{equation}
The symmetric second difference has the expansion
\begin{equation}\label{eq:diff2}
f(x+\a+\delta)-2f(x+\a)+f(x+\a-\delta)
\sim\sum_{n\ge2}(-1)^n\frac{\Delta_n(\a,\b)}{x^{n+1}},
\end{equation}
with
\begin{equation}\label{eq:Delta}
\Delta_n(\a,\b)=2\sum_{i=1}^{\lfloor n/2\rfloor}
\binom n{2i}(-\b)^iR_{n-2i}(\a).
\end{equation}
Here $\nab_m$ is defined for $m\ge0$ and $\Delta_n$ for $n\ge0$, with
$\Delta_0=\Delta_1=0$ (empty sums). For fixed $\b$, both satisfy the Appell lowering
relation
\[
\partial_\a\nab_m=m\nab_{m-1}\ \ (m\ge1),\qquad
\partial_\a\Delta_n=n\Delta_{n-1}\ \ (n\ge1).
\]
\end{proposition}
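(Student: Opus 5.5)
Apply Theorem~\ref{thm:shift} (in the uniform form of Corollary~\ref{cor:moving}) at the three fixed shifts $t=\a+\delta$, $t=\a$ and $t=\a-\delta$. Since the asymptotic expansions are linear, subtracting and adding them termwise gives
\[
f(x+\a+\delta)-f(x+\a-\delta)\sim\sum_{n\ge0}(-1)^n\frac{R_n(\a+\delta)-R_n(\a-\delta)}{x^{n+1}},
\]
and similarly for the second difference with numerator $R_n(\a+\delta)-2R_n(\a)+R_n(\a-\delta)$. The whole argument therefore reduces to the polynomial identities \eqref{eq:nabla} and \eqref{eq:Delta}. For $\delta>0$ we divide the first expansion by $2\delta$.

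The $n=0$ term vanishes because $R_0\equiv a_0$. Reindexing with $n=m+1$ turns $(-1)^{m+1}\bigl(R_{m+1}(\a+\delta)-R_{m+1}(\a-\delta)\bigr)/(2\delta\,x^{m+2})$ into $-(-1)^m(m+1)\nab_m/x^{m+2}$, which is \eqref{eq:dd} by the definition of $\nab_m$.

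To identify the coefficients we use the Appell addition formula $R_n(\a+u)=\sum_j\binom nj R_{n-j}(\a)u^j$, which follows from \eqref{eq:appell}, or from $A(z)e^{(\a+u)z}=A(z)e^{\a z}e^{uz}$.

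For the odd combination, $R_{m+1}(\a+\delta)-R_{m+1}(\a-\delta)=2\sum_{i}\binom{m+1}{2i+1}R_{m-2i}(\a)\delta^{2i+1}$. Dividing by $2(m+1)\delta$ and using $\binom{m+1}{2i+1}/(m+1)=\binom m{2i}/(2i+1)$ gives \eqref{eq:nabla}, with $\delta^{2i}=(-\b)^i$.

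For the even combination, the $j=0$ terms cancel against $-2R_n(\a)$. What remains is $2\sum_{i\ge1}\binom n{2i}\delta^{2i}R_{n-2i}(\a)$, which is $\Delta_n$. For $n=0,1$ the sum is empty, so the expansion starts at $n=2$.

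The case $\delta=0$ needs a separate word, because the difference quotient is then read as the derivative $f'(x+\a)$. Formula \eqref{eq:nabla} gives $\nab_m(\a,0)=R_m(\a)$, so the right side of \eqref{eq:dd} becomes $-\sum_{m\ge0}(-1)^m(m+1)R_m(\a)x^{-m-2}$, which is the termwise $x$-derivative of \eqref{eq:shift}. A general asymptotic expansion cannot be differentiated termwise, so I would not read $\delta=0$ as asserting $f'(x+\a)$ has this expansion without extra hypotheses (e.g. $f'$ possessing an asymptotic expansion, or $f=L_A$). Instead I would interpret $\delta=0$ purely as the continuity statement for the coefficients $\nab_m$, which the polynomial formula makes evident. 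This interpretive point is the only delicate step; everything else is algebra.

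Finally, the lowering relations follow by differentiating \eqref{eq:nabla} and \eqref{eq:Delta} in $\a$ with $\b$ fixed and using $R_k'=kR_{k-1}$.

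For $\nab_m$, each term produces $(m-2i)\binom m{2i}=m\binom{m-1}{2i}$, so the sum becomes $m\nab_{m-1}$. The top term with $m-2i=0$ contributes zero, which matches the upper limit $\lfloor (m-1)/2\rfloor$.

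For $\Delta_n$, the same identity $(n-2i)\binom n{2i}=n\binom{n-1}{2i}$ gives $n\Delta_{n-1}$.

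Alternatively, the lowering relations follow at once from the definitions: $\nab_m$ and $\Delta_n$ are combinations of $R_k$ evaluated at $\a\pm\delta$ and $\a$ with $\delta$ fixed, so $R_k'=kR_{k-1}$ applies directly.
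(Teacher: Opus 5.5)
Your proof is correct and is essentially the paper's argument: apply the shift theorem at $\a\pm\delta$ (and $\a$), expand $R_n(\a\pm\delta)$ about $\a$ by the Appell addition (Taylor) formula so that parity cancels the even or odd powers of $\delta$, use $\binom{m+1}{2i+1}/(m+1)=\binom m{2i}/(2i+1)$, and differentiate in $\a$ for the lowering relations. Your caution about $\delta=0$ is sound and addresses a point the paper's proof leaves implicit: there the statement should be read only as continuity of the coefficients $\nab_m$, not as an asymptotic expansion of $f'(x+\a)$, which would need extra hypotheses.
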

\begin{proof}
Taylor's formula and the Appell identity give
\[
R_{m+1}^{(j)}(\a)=\frac{(m+1)!}{(m+1-j)!}R_{m+1-j}(\a).
\]
Subtracting the expansions at $\a+\delta$ and $\a-\delta$ cancels the even
powers of $\delta$; division by $2(m+1)\delta$ gives \eqref{eq:nabla}.
Adding those two expansions and subtracting $2R_n(\a)$ cancels the odd powers
and gives \eqref{eq:Delta}. Substituting these coefficients into \eqref{eq:shift} and
matching the coefficient of $x^{-m-2}$ (respectively $x^{-n-1}$) proves
\eqref{eq:dd} and \eqref{eq:diff2}. Differentiation with respect to $\a$
proves the last two identities.
\end{proof}

For example,
\[
\nab_0=a_0,\quad \nab_1=R_1(\a),\quad
\nab_2=R_2(\a)-\tfrac13a_0\b,\quad
\nab_3=R_3(\a)-\b R_1(\a).
\]
As $\b\to0$, $\nab_m\to R_m(\a)$ and \eqref{eq:dd} becomes the expansion of
$f'(x+\a)$. The ordinary increment is obtained from
$\a=\delta=t/2$; its $a_0/x$ term cancels, so
$f(x+t)-f(x)\sim-a_0t/x^2+\cdots$. The second difference has leading term
$-2a_0\b/x^3=2a_0\delta^2/x^3$, which agrees with
$\delta^2f''(x+\a)$ to leading order and displays the discrete convexity test.

\section{The Mills ratio and Hermite polynomials}\label{sec:mills}
Let $\varphi,\Phi$ be the standard normal density and distribution and
\[
M(z)=\frac{1-\Phi(z)}{\varphi(z)}=e^{z^2/2}\!\int_z^\infty e^{-u^2/2}\,\dd u
\]
the Mills ratio. Its classical divergent expansion is
\[
M(x)\sim\sum_{k\ge0}(-1)^k(2k-1)!!\,x^{-2k-1},
\qquad (-1)!!=1.
\]
Since the probabilists' Hermite
polynomials have $\He_{2k}(0)=(-1)^k(2k-1)!!$ and $\He_{2k+1}(0)=0$, this reads
$M(x)\sim\sum_n\He_n(0)x^{-n-1}$. Under a shift, the coefficients are the full
polynomials $\He_n(t)$.

\begin{lemma}[Laplace representation]\label{lem:lap}
For every $z\in\R$,
\[
M(z)=\int_0^\infty e^{-zs}e^{-s^2/2}\,\dd s.
\]
\end{lemma}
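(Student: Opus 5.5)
The plan is to complete the square in the exponent and then make a single substitution. The integral converges absolutely for every real $z$: the Gaussian factor $e^{-s^2/2}$ dominates $e^{-zs}$ whatever the sign of $z$, so both sides are finite. The proof is then an exact computation.

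First write
\[
-zs-\tfrac12 s^2=-\tfrac12(s+z)^2+\tfrac12 z^2 .
\]
This gives
\[
\int_0^\infty e^{-zs}e^{-s^2/2}\,\dd s=e^{z^2/2}\int_0^\infty e^{-(s+z)^2/2}\,\dd s .
\]
Next substitute $u=s+z$, with $\dd u=\dd s$. The lower limit $s=0$ becomes $u=z$ and the upper limit stays at $\infty$. The right-hand side is therefore
\[
e^{z^2/2}\int_z^\infty e^{-u^2/2}\,\dd u ,
\]
which is exactly the second expression for $M(z)$ in the definition.

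To confirm that this expression agrees with $(1-\Phi(z))/\varphi(z)$, use $\varphi(z)=e^{-z^2/2}/\sqrt{2\pi}$ and
\[
1-\Phi(z)=\frac1{\sqrt{2\pi}}\int_z^\infty e^{-u^2/2}\,\dd u .
\]
The factors $\sqrt{2\pi}$ cancel in the quotient.

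There is no genuine obstacle here. The only points to watch are that the substitution is valid for negative $z$ as well (the lower limit simply becomes negative, which causes no difficulty), and that absolute convergence holds for all real $z$, so no analytic continuation is needed.

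As a remark, the kernel $A(-s)=e^{-s^2/2}$ has Taylor coefficients $(-1)^n\He_n(0)$. Hence Proposition~\ref{prop:laplace-appell}, applied with $b=0$ since the kernel is bounded, immediately recovers the classical expansion $M(x)\sim\sum_n\He_n(0)x^{-n-1}$. It also yields the shifted expansion with $\He_n(t)$, up to the sign convention relating $A(z)=e^{-z^2/2}$ to the Hermite Appell sequence, which is to be checked at that stage.
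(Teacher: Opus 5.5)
Your proof is correct and matches the paper's: the substitution $u=z+s$ (which is the same step as your completion of the square) converts $e^{z^2/2}\int_z^\infty e^{-u^2/2}\,\dd u$ into $\int_0^\infty e^{-zs-s^2/2}\,\dd s$; the paper runs this computation from the definition toward the integral, whereas you run it in the reverse direction. Your closing remark about Proposition~\ref{prop:laplace-appell} is not needed for the lemma, and the sign check you defer works out without adjustment: $e^{-z^2/2}e^{tz}$ is exactly the generating function of $\He_n(t)$, so $R_n=\He_n$.
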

\begin{proof}
Substitute $u=z+s$ in the defining integral. Then
\[
M(z)=e^{z^2/2}\int_0^\infty e^{-(z+s)^2/2}\,\dd s
=\int_0^\infty e^{-zs-s^2/2}\,\dd s.
\]
The integral converges for all real $z$ because of the Gaussian factor.
\end{proof}

This representation is classical; Luci\'c \cite{Lucic} attributes it to Jacobi and uses
it, together with the Hermite expansion of monomials, for the derivatives $M^{(n)}$.
Our use of it is under a shift, in Theorem~\ref{thm:millsher}.

\begin{theorem}[Mills--Hermite, with remainder]\label{thm:millsher}
For every $x>0$, $t\in\R$ and integer $N\ge1$,
\begin{equation}\label{eq:millsher}
M(x+t)=\sum_{n=0}^{N-1}(-1)^n\frac{\He_n(t)}{x^{n+1}}+\rho_N(x,t),
\end{equation}
where
\begin{equation}\label{eq:rembound}
|\rho_N(x,t)|\le\frac{M_N(t)}{x^{N+1}},\qquad
M_N(t):=\sup_{s\ge0}\bigl|\He_N(s+t)\bigr|e^{-ts-s^2/2}<\infty .
\end{equation}
In particular, \eqref{eq:millsher} is an asymptotic expansion as $x\to\infty$,
uniformly for $t$ in every compact interval.
\end{theorem}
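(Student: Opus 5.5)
Starting from Lemma~\ref{lem:lap}, I would write
\[
M(x+t)=\int_0^\infty e^{-xs}\,g_t(s)\,\dd s,\qquad g_t(s):=e^{-ts-s^2/2}.
\]
The kernel $g_t(s)=e^{-(s+t)^2/2+t^2/2}$ has derivatives given by the Rodrigues-type identity $\frac{\dd^n}{\dd u^n}e^{-u^2/2}=(-1)^n\He_n(u)e^{-u^2/2}$. Hence
\[
g_t^{(n)}(s)=(-1)^n\He_n(s+t)\,g_t(s),\qquad g_t^{(n)}(0)=(-1)^n\He_n(t),
\]
which is consistent with the Appell structure, since $\He_n$ is generated by $e^{-z^2/2}$. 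Rather than invoking Watson's lemma, as in Proposition~\ref{prop:laplace-appell}, I would use repeated integration by parts in $s$, because this gives the exact remainder.

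Concretely, for $x>0$ I integrate $e^{-xs}g_t(s)$ by parts $N$ times, each time integrating $e^{-xs}$ and differentiating $g_t$. Each step produces the boundary term $g_t^{(n)}(0)/x^{n+1}=(-1)^n\He_n(t)/x^{n+1}$ at $s=0$. The boundary terms at $s=\infty$ vanish because $g_t^{(n)}$ is a polynomial times a Gaussian. After $N$ steps this gives \eqref{eq:millsher} with
\[
\rho_N(x,t)=\frac{1}{x^{N}}\int_0^\infty e^{-xs}g_t^{(N)}(s)\,\dd s
=\frac{(-1)^N}{x^{N}}\int_0^\infty e^{-xs}\He_N(s+t)e^{-ts-s^2/2}\,\dd s .
\]
An induction on $N$, using the one-step identity $\int_0^\infty e^{-xs}h(s)\,\dd s=h(0)/x+x^{-1}\int_0^\infty e^{-xs}h'(s)\,\dd s$ with $h=g_t^{(n)}$, packages this cleanly.

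For the bound, I pull out the supremum of $|\He_N(s+t)|e^{-ts-s^2/2}$ over $s\ge0$, which is $M_N(t)$, and use $\int_0^\infty e^{-xs}\,\dd s=1/x$. This gives $|\rho_N|\le M_N(t)/x^{N+1}$. Finiteness of $M_N(t)$ is immediate: the function is continuous on $[0,\infty)$ and tends to $0$ as $s\to\infty$, since the polynomial times $e^{-ts}$ is dominated by $e^{-s^2/2}$.

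For uniformity on a compact $|t|\le T$, I bound
\[
M_N(t)\le\sup_{s\ge0}\bigl(\max_{|u|\le s+T}|\He_N(u)|\bigr)e^{Ts-s^2/2}<\infty,
\]
which is independent of $t$. Then $\rho_N=O(x^{-N-1})$ uniformly, and since $N$ is arbitrary the expansion is asymptotic uniformly in $t$. I do not expect a real obstacle. The only care needed is checking that the boundary terms vanish at infinity, that the sign bookkeeping produces $(-1)^n\He_n(t)$, and that $x>0$ is required for $\int_0^\infty e^{-xs}\,\dd s=1/x$, since $t$ may be negative.
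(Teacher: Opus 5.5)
Your proof is correct and is essentially the paper's argument. The paper writes $g(s)=e^{-ts-s^2/2}$ as its Taylor polynomial plus the integral remainder $r_N(s)$, bounds $|r_N(s)|\le M_N(t)s^N/N!$ using the same Rodrigues identity for $g^{(N)}$, and integrates against $e^{-xs}$; your $N$-fold integration by parts instead gives the exact remainder $x^{-N}\int_0^\infty e^{-xs}g^{(N)}(s)\,\dd s$, which equals $\int_0^\infty e^{-xs}r_N(s)\,\dd s$ by Fubini and yields the same bound, while your explicit $t$-independent majorant for $M_N(t)$ on $|t|\le T$ spells out the uniformity that the paper asserts in one line.
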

\begin{proof}
By Lemma~\ref{lem:lap}, $M(x+t)=\int_0^\infty e^{-xs}g(s)\dd s$ with
$g(s)=e^{-ts-s^2/2}$. The Hermite generating function $e^{ys-s^2/2}=\sum_n\He_n(y)s^n/n!$
at $y=-t$, together with the parity $\He_n(-t)=(-1)^n\He_n(t)$, gives
$g(s)=\sum_n(-1)^n\He_n(t)s^n/n!$. Write the Taylor expansion with integral remainder,
\[
\begin{aligned}
g(s)&=\sum_{n=0}^{N-1}(-1)^n\He_n(t)\frac{s^n}{n!}+r_N(s),\\
r_N(s)&=\frac1{(N-1)!}\int_0^s(s-u)^{N-1}g^{(N)}(u)\,\dd u .
\end{aligned}
\]
Because
\[
\frac{\dd^N}{\dd z^N}e^{-z^2/2}=(-1)^N\He_N(z)e^{-z^2/2},
\qquad g(s)=e^{t^2/2}e^{-(s+t)^2/2},
\]
we have
\[
g^{(N)}(s)=(-1)^N\He_N(s+t)e^{-ts-s^2/2}.
\]
Hence $|g^{(N)}(s)|\le M_N(t)$ and
$|r_N(s)|\le M_N(t)s^N/N!$ for $s\ge0$. Multiplying by $e^{-xs}$, integrating,
and using $\int_0^\infty s^Ne^{-xs}\dd s=N!/x^{N+1}$,
\[
|\rho_N(x,t)|=\Bigl|\int_0^\infty e^{-xs}r_N(s)\,\dd s\Bigr|
\le\frac{M_N(t)}{N!}\int_0^\infty s^Ne^{-xs}\dd s=\frac{M_N(t)}{x^{N+1}}.
\]
Finally,
$|\He_N(s+t)|e^{-ts-s^2/2}
=e^{t^2/2}|\He_N(s+t)|e^{-(s+t)^2/2}\to0$
as $s\to\infty$. Continuity gives $M_N(t)<\infty$, and the same argument on a
compact set of values of $t$ gives the stated local uniformity.
\end{proof}

The coefficients $\He_n(t)$ have no fixed sign pattern in $n$, so there is no general
alternating-series rule for the Hermite partial sums. Guaranteed two-sided bounds come
instead from the \emph{underlying} Mills ratio at $z=x+t$, where the scalar series
alternates.

\begin{proposition}[Two-sided envelope]\label{prop:env}
For $z=x+t>0$ and every $K\ge1$,
\begin{equation}\label{eq:env}
\begin{aligned}
M(x+t)&=\sum_{k=0}^{K-1}
\frac{(-1)^k(2k-1)!!}{(x+t)^{2k+1}}+r_K,\\
\sgn r_K&=(-1)^K,\qquad
|r_K|<\frac{(2K-1)!!}{(x+t)^{2K+1}},
\end{aligned}
\end{equation}
so the even and odd partial sums bracket $M(x+t)$ from below and above; the optimal
truncation by least term occurs near $K^*\approx(x+t)^2/2$.
\end{proposition}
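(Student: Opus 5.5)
We work with the Laplace representation of Lemma~\ref{lem:lap} at $z=x+t>0$, namely $M(z)=\int_0^\infty e^{-zs}e^{-s^2/2}\,\dd s$. The envelope will follow from the alternating Taylor remainder of the even kernel $e^{-s^2/2}$ in the variable $u=s^2/2$. Set $h(u)=e^{-u}$. Taylor's theorem with Lagrange remainder, applied to $e^{-u}$ at $u=s^2/2\ge0$, gives
\[
e^{-s^2/2}=\sum_{k=0}^{K-1}\frac{(-1)^k s^{2k}}{2^k k!}+\varepsilon_K(s),\qquad
\varepsilon_K(s)=\frac{(-1)^K}{K!}\Bigl(\frac{s^2}{2}\Bigr)^{K}e^{-\theta s^2/2},
\]
for some $\theta=\theta(s)\in(0,1)$. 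Equivalently, we can use the integral form $\varepsilon_K(s)=\frac{(-1)^K}{(K-1)!}\int_0^{s^2/2}(s^2/2-v)^{K-1}e^{-v}\,\dd v$. The integral form is cleaner for measurability and for strictness. From it we read off $\sgn\varepsilon_K(s)=(-1)^K$ for $s>0$, and also
\[
0<|\varepsilon_K(s)|<\frac{1}{K!}\Bigl(\frac{s^2}{2}\Bigr)^K \qquad (s>0).
\]
The strict inequality holds because $e^{-v}<1$ on a set of positive measure.

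Next we multiply by $e^{-zs}$ and integrate over $(0,\infty)$. Since $z>0$, every term is absolutely integrable. Using $\int_0^\infty s^{2k}e^{-zs}\,\dd s=(2k)!/z^{2k+1}$ and $(2k)!/(2^k k!)=(2k-1)!!$, the polynomial part gives exactly $\sum_{k<K}(-1)^k(2k-1)!!\,z^{-2k-1}$. The remainder is $r_K=\int_0^\infty e^{-zs}\varepsilon_K(s)\,\dd s$. Its integrand has the constant sign $(-1)^K$ and is nonzero for $s>0$, so $\sgn r_K=(-1)^K$. Integrating the strict pointwise bound against the positive weight $e^{-zs}$ gives $|r_K|<(2K-1)!!/z^{2K+1}$. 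This proves \eqref{eq:env}.

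The bracketing statement is then immediate. With $S_K$ the $K$-term partial sum, we have $M-S_K=r_K$, which is positive for even $K$ and negative for odd $K$. So the partial sums with an even number of terms lie below $M(x+t)$ and those with an odd number lie above it. The phrase ``even and odd partial sums'' should be read in this sense, and we will state the convention explicitly.

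For the optimal truncation claim, the ratio of consecutive term magnitudes is $(2K+1)/z^2$. This ratio is below $1$ exactly while $K<(z^2-1)/2$, so the smallest term, and hence the best remainder bound, occurs at $K^*\approx z^2/2$. This part is only heuristic, so we will present it as a remark rather than prove it rigorously. The main point requiring care is the strict inequality and the sign of $r_K$. The integral form of the remainder handles both cleanly, so no real obstacle is expected.
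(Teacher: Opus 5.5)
Your proof is correct, but it follows a different route from the paper's.

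The paper works directly with the tail integral $e^{z^2/2}\int_z^\infty e^{-u^2/2}\,\dd u$. Repeated integration by parts in $u$ gives the exact remainder $r_K=(-1)^K(2K-1)!!\,e^{z^2/2}\int_z^\infty u^{-2K}e^{-u^2/2}\,\dd u$, whose sign is evident. The size bound then follows from $u^{-2K}\le z^{-2K}$ together with the separate first-order estimate $\int_z^\infty e^{-u^2/2}\,\dd u<z^{-1}e^{-z^2/2}$.

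You instead pass to the Laplace side through Lemma~\ref{lem:lap}. There you get the envelope pointwise in $s$ from the alternating Taylor remainder of $e^{-v}$ at $v=s^2/2$, and then integrate against the positive weight $e^{-zs}$. In effect this is Watson's lemma with sign control.

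What your route buys:
\begin{itemize}
\item It is self-contained: the pointwise bound $|\varepsilon_K(s)|<(s^2/2)^K/K!$ integrates exactly to the first omitted term $(2K-1)!!/z^{2K+1}$, so no auxiliary Mills inequality is needed.
\item It fits the paper's Borel--Laplace viewpoint and isolates the mechanism: a kernel whose Taylor remainders alternate in sign and are dominated by the next term.
\item The same mechanism makes the Stieltjes expansions of \S\ref{sec:stieltjes} enveloping, since there $A(-s)$ is completely monotone in $s$. For the Gaussian kernel, complete monotonicity holds in $v=s^2/2$ rather than in $s$, which is exactly why your change of variable is the right one.
\end{itemize}

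What the paper's route buys is a closed, one-dimensional remainder in the original variable. That remainder can be estimated more sharply if better numerical bounds are wanted.

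Your reading of ``even and odd partial sums'' as referring to the number $K$ of retained terms agrees with $\sgn r_K=(-1)^K$. The paper also treats $K^*\approx z^2/2$ only heuristically, so presenting it as a remark is appropriate.
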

\begin{proof}
Repeated integration by parts gives
\[
r_K=(-1)^K(2K-1)!!\,e^{z^2/2}
\int_z^\infty u^{-2K}e^{-u^2/2}\,\dd u.
\]
The integral is
positive, which determines the sign. Since $u^{-2K}\le z^{-2K}$ on $[z,\infty)$,
\[
\int_z^\infty u^{-2K}e^{-u^2/2}\dd u\le z^{-2K}\int_z^\infty e^{-u^2/2}\dd u
<z^{-2K-1}e^{-z^2/2},
\]
the last step by the bound $\int_z^\infty e^{-u^2/2}\dd u<z^{-1}e^{-z^2/2}$. This
proves the remainder bound.
\end{proof}

The two expansions organise the same function in different ways. Formula
\eqref{eq:env} uses the variable $x+t$ and the sign-definite numbers
$(2k-1)!!=(-1)^k\He_{2k}(0)$, and provides rigorous bounds for computation.
Formula \eqref{eq:millsher} uses the scale $x$ and displays the polynomial dependence
on the shift, as required for the formulas of \S\ref{sec:framework}. As an
illustration of the first difference \eqref{eq:dd}, the
increment from the base point is
\[
M(x+t)-M(x)\sim\sum_{n\ge1}(-1)^n\frac{\He_n(t)-\He_n(0)}{x^{n+1}}
=-\frac t{x^2}+\frac{t^2}{x^3}-\frac{t^3-3t}{x^4}+\cdots,
\]
one order smaller than $M\sim x^{-1}$ (the $1/x$ term cancels), with leading
$-t/x^2\sim tM'(x)$. Thus \eqref{eq:millsher} is adapted to symbolic dependence on
the shift, while \eqref{eq:env} is adapted to certified numerical evaluation.

\section{Applications: the non-central tail and the hazard}\label{sec:apps}
We now apply the preceding formulas with $x$ as the asymptotic variable and with
the shift fixed. All coefficients are then given explicitly in terms of Hermite
polynomials.

\subsection{The non-central Gaussian tail}
Let $X\sim N(\mu,1)$. Then
$P(X>a)=1-\Phi(a-\mu)=\varphi(a-\mu)M(a-\mu)$. For fixed $\mu$, two useful forms as
$a\to\infty$ follow from \S\ref{sec:mills}.

\begin{theorem}[Non-central tail]\label{thm:noncentral}
\textup{(i)} For $a>\mu$ and every $K$,
\begin{equation}\label{eq:alt}
\begin{aligned}
P(X>a)
&=\varphi(a-\mu)\Bigl[
\sum_{k=0}^{K-1}\frac{(-1)^k(2k-1)!!}{(a-\mu)^{2k+1}}+r_K\Bigr],\\
\sgn r_K&=(-1)^K,\qquad
|r_K|<\frac{(2K-1)!!}{(a-\mu)^{2K+1}},
\end{aligned}
\end{equation}
so the partial sums bracket the tail. \textup{(ii)} As $a\to\infty$, locally uniformly
for $\mu$ in compact sets, with $\varphi(a-\mu)=\varphi(a)e^{a\mu-\mu^2/2}$,
\begin{equation}\label{eq:noncentral}
P(X>a)\sim\varphi(a)\,e^{a\mu-\mu^2/2}\sum_{n=0}^\infty\frac{\He_n(\mu)}{a^{n+1}}.
\end{equation}
\end{theorem}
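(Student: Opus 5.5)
Both parts reduce to the identity $P(X>a)=\varphi(a-\mu)M(a-\mu)$ combined with the two expansions of \S\ref{sec:mills}. For (i), I apply Proposition~\ref{prop:env} at $z=a-\mu>0$. Taking $x+t=a-\mu$ with any splitting, for instance $x=a$ and $t=-\mu$ (only $z>0$ matters), the proposition gives the representation of $M(a-\mu)$ with the stated sign and size of $r_K$. Multiplying by $\varphi(a-\mu)>0$ preserves the signs. Hence the partial sums of the bracket, times $\varphi(a-\mu)$, alternately undershoot and overshoot the tail. Nothing more is needed here.

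For (ii), I first factor the Gaussian. Since $(a-\mu)^2/2=a^2/2-a\mu+\mu^2/2$, we have $\varphi(a-\mu)=\varphi(a)e^{a\mu-\mu^2/2}$ exactly. It remains to expand $M(a-\mu)$ in powers of $a^{-1}$. I apply Theorem~\ref{thm:millsher} with $x=a$ and $t=-\mu$:
\[
M(a-\mu)=\sum_{n=0}^{N-1}(-1)^n\frac{\He_n(-\mu)}{a^{n+1}}+\rho_N(a,-\mu).
\]
The parity $\He_n(-\mu)=(-1)^n\He_n(\mu)$ turns the coefficients into $\He_n(\mu)$, which removes the alternating sign exactly as in \eqref{eq:noncentral}. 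The remainder satisfies $|\rho_N(a,-\mu)|\le M_N(-\mu)a^{-N-1}$.

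The only point needing care is uniformity in $\mu$. I would show that $\sup_{|\mu|\le T}M_N(-\mu)<\infty$. Write $|\He_N(s-\mu)|e^{\mu s-s^2/2}=e^{\mu^2/2}|\He_N(s-\mu)|e^{-(s-\mu)^2/2}$. The function $|\He_N(y)|e^{-y^2/2}$ is bounded on $\R$ by a constant $c_N$, so $M_N(-\mu)\le c_Ne^{T^2/2}$ for $|\mu|\le T$. This is what the final sentence of Theorem~\ref{thm:millsher} asserts. Consequently the remainder is $O(a^{-N-1})$ uniformly for $|\mu|\le T$, and since $N$ is arbitrary this gives the asymptotic series.

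Multiplying through by the exact prefactor $\varphi(a)e^{a\mu-\mu^2/2}$ then yields \eqref{eq:noncentral}, read in the relative sense: the bracketed series is an asymptotic expansion of $P(X>a)/\bigl(\varphi(a)e^{a\mu-\mu^2/2}\bigr)$. I expect no real obstacle, and the main thing is to state clearly that the asymptotic claim is relative to this growing prefactor.
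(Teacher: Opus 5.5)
Your proposal is correct and follows the paper's proof: (i) is Proposition~\ref{prop:env} at $z=a-\mu$, multiplied by the positive factor $\varphi(a-\mu)$, and (ii) is Theorem~\ref{thm:millsher} at $x=a$, $t=-\mu$ with the parity $\He_n(-\mu)=(-1)^n\He_n(\mu)$, followed by multiplication by the exact prefactor $\varphi(a)e^{a\mu-\mu^2/2}$. Your bound $M_N(-\mu)\le c_N e^{T^2/2}$ for $|\mu|\le T$ correctly spells out the local uniformity that the paper takes from the last sentence of Theorem~\ref{thm:millsher}.
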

\begin{proof}
(i) is Proposition~\ref{prop:env} at $z=a-\mu$. For (ii), put $x=a$, $t=-\mu$ in
\eqref{eq:millsher} and use $\He_n(-\mu)=(-1)^n\He_n(\mu)$, so the signs cancel and
$M(a-\mu)\sim\sum_n\He_n(\mu)a^{-n-1}$; multiply by $\varphi(a-\mu)$.
\end{proof}

Although the limiting variable in \eqref{eq:noncentral} is $a$, its coefficients
display the dependence on the non-centrality $\mu$ explicitly --- the regime
$a\to\infty$ with $\mu$ held fixed (the threshold large relative to the mean). Read as
the power
$\pi(\mu):=P(X>a)$ of the one-sided test ``reject if $X>a$'', its value at the null is
the size, and the corresponding asymptotic expansion is
\begin{equation}\label{eq:size}
\pi(0)=\varphi(a)M(a)=1-\Phi(a)=\alpha,\qquad\text{with}\qquad
\pi(0)\sim\varphi(a)\sum_{n\ge0}\frac{\He_n(0)}{a^{n+1}}.
\end{equation}
Table~\ref{tab:noncentral} illustrates the accuracy of \eqref{eq:noncentral} for
several non-centralities.

\begin{table}[ht]\centering\small
\begin{tabular}{ccccc}
\toprule
$\mu$ & $P(X>a)$ exact & leading term & expansion \eqref{eq:noncentral}, $7$ terms & rel.\ error\\
\midrule
$0$   & $3.1671\cdot10^{-5}$ & $3.3458\cdot10^{-5}$ & $3.1636\cdot10^{-5}$ & $1.1\cdot10^{-3}$\\
$0.5$ & $2.3263\cdot10^{-4}$ & $2.1817\cdot10^{-4}$ & $2.3295\cdot10^{-4}$ & $1.4\cdot10^{-3}$\\
$1$   & $1.3499\cdot10^{-3}$ & $1.1080\cdot10^{-3}$ & $1.3525\cdot10^{-3}$ & $1.9\cdot10^{-3}$\\
$1.5$ & $6.2097\cdot10^{-3}$ & $4.3821\cdot10^{-3}$ & $6.2052\cdot10^{-3}$ & $7.2\cdot10^{-4}$\\
$2$   & $2.2750\cdot10^{-2}$ & $1.3498\cdot10^{-2}$ & $2.2662\cdot10^{-2}$ & $3.9\cdot10^{-3}$\\
\bottomrule
\end{tabular}
\caption{Non-central tail $P(X>a)$ for $X\sim N(\mu,1)$ at threshold $a=4$; size
$\alpha=1-\Phi(4)=3.17\cdot10^{-5}$ is the $\mu=0$ row. ``Leading'' is the $n=0$ term
$\varphi(a)e^{a\mu-\mu^2/2}/a$; the expansion is \eqref{eq:noncentral} truncated after
seven terms ($n\le6$). The Hermite series oscillates in $n$, so seven terms already give
three figures; rigorous brackets come from the alternating form \eqref{eq:alt}. Values
computed in \texttt{mpmath} at $40$-digit precision.}
\label{tab:noncentral}
\end{table}

The exact identity $\pi(\mu)=\Phi(\mu-a)$ gives
$\partial_\mu\pi=\varphi(a-\mu)$. The formal Hermite expansion is compatible with
this identity. Indeed, writing
$G(\mu)\sim\sum_{n\ge0}\He_n(\mu)a^{-n-1}$, the Appell property
$\He_n'=n\He_{n-1}$ and the Hermite recurrence give, as an identity of formal
series,
\[
(a-\mu)G+G'=\sum_n\frac{(a-\mu)\He_n(\mu)+n\He_{n-1}(\mu)}{a^{n+1}}=1,
\]
where the last equality telescopes coefficient by coefficient. Consequently the
formal derivative of \eqref{eq:noncentral} is $\varphi(a-\mu)$, as required. In
particular, the exact local power slope at the null is $\pi'(0)=\varphi(a)$.

\subsection{Sensitivity and curvature in the mean}
The reduced tail
$r(\mu):=M(a-\mu)\sim\sum_{n\ge0}\He_n(\mu)\,a^{-n-1}$ has Hermite polynomials
in the mean $\mu$. Therefore the finite-difference formulas of
Proposition~\ref{prop:diff} apply in $\mu$ (the substitution $t=-\mu$ and the Hermite
parity make the expansions non-alternating). We evaluate the reduced tail at
$\mu$ and $\mu\pm\delta$, which represents either a sensitivity calculation or a
symmetric two-point uncertainty in the mean. With
$\b=-\delta^2$,
\begin{equation}\label{eq:mean-first}
\begin{aligned}
\frac{r(\mu+\delta)-r(\mu-\delta)}{2\delta}
&\sim\sum_{m\ge0}\frac{(m+1)\,\nab_m(\mu,\b)}{a^{m+2}},\\
\nab_m(\mu,\b)
&=\sum_{i=0}^{\lfloor m/2\rfloor}
\frac1{2i+1}\binom m{2i}(-\b)^i\He_{m-2i}(\mu),
\end{aligned}
\end{equation}
a central (two-point) difference estimate of the sensitivity $\partial_\mu r=-M'(a-\mu)$, with leading term
$a^{-2}$ and the $O(\delta^2)$ bias displayed term by term. The symmetric second
difference is
\begin{equation}\label{eq:mean-second}
\begin{aligned}
r(\mu+\delta)-2r(\mu)+r(\mu-\delta)
&\sim\sum_{n\ge2}\frac{\Delta_n(\mu,\b)}{a^{n+1}},\\
\Delta_n(\mu,\b)
&=2\sum_{i=1}^{\lfloor n/2\rfloor}
\binom n{2i}(-\b)^i\He_{n-2i}(\mu),
\end{aligned}
\end{equation}
with leading term $2\delta^2/a^3>0$. Since $M$ is convex, $r$ is convex in $\mu$, so a
symmetric spread $\mu\pm\delta$ of the mean raises the average,
$\tfrac12[r(\mu+\delta)+r(\mu-\delta)]\ge r(\mu)$, and \eqref{eq:mean-second} is its
asymptotic expansion. The same sign holds for the exceedance probability itself when the
spread stays below the threshold: $P(X>a)=\Phi(\mu-a)$ has
$\partial_\mu^2P=(a-\mu)\varphi(a-\mu)>0$ for $\mu<a$, so for $\mu+\delta<a$ averaging
over a symmetric uncertainty in the mean can only increase the expected tail
probability.

\subsection{Mean-excess and hazard}
The hazard rate (reciprocal Mills) $\lambda=1/M=\varphi/(1-\Phi)$ is obtained by
inverting the Mills expansion. Factoring out the leading term,
$M(x+t)=x^{-1}\bigl(1+O(x^{-1})\bigr)$, the normalised series $xM(x+t)\sim1-t/x+\cdots$
has leading coefficient $1$, so its reciprocal is again a well-defined asymptotic series
\cite{Olver}. Multiplying back by $x$, with the leading $x+t$ exact and the corrections
organised in the scale $x$,
\[
\lambda(x+t)\sim (x+t)+\frac1x-\frac t{x^2}+\frac{t^2-2}{x^3}+\cdots .
\]
The mean residual life (or mean excess) is
$e(x)=\E[X-x\mid X>x]=\dfrac{\varphi(x)}{1-\Phi(x)}-x=\lambda(x)-x$, not the Mills ratio
$M(x)$, so
$e(x+t)=\lambda(x+t)-(x+t)\sim\dfrac1x-\dfrac t{x^2}+\dfrac{t^2-2}{x^3}+\cdots$. The
inverse Mills ratio $\lambda$ is the selection-correction term of the Heckman model
\cite{Heckman}, so the expansion describes that correction at a large argument; any
connection to extreme-value mean-excess analysis we leave as a potential application
rather than develop here.

\section{Stieltjes-admissible sequences and two-sided bounds}\label{sec:stieltjes}
We next consider a positive measure $\mu$ on $[0,\infty)$ with finite moments of
every order:
\begin{equation}\label{eq:moment-kernel}
a_n=\int_0^\infty\sigma^n\,\dd\mu(\sigma),\qquad
A(-s)=\int_0^\infty e^{-\sigma s}\,\dd\mu(\sigma).
\end{equation}
Here the second formula defines the kernel on $s\ge0$; its right derivatives satisfy
$\frac{\dd^n}{\dd s^n}A(-s)|_{s=0}=(-1)^na_n$, even when the formal series
$A(z)=\sum a_nz^n/n!$ has zero radius of convergence.
Thus $(a_n)$ is a Stieltjes moment sequence and $A(-s)$ is completely monotone.
Equivalently, for every size, both Hankel matrices $[a_{i+j}]$ and
$[a_{i+j+1}]$ are positive semidefinite \cite{Akhiezer,Widder}. Fubini's theorem gives
\begin{equation}\label{eq:stieltjes-transform}
f(x)=\int_0^\infty e^{-xs}A(-s)\,\dd s
=\int_0^\infty\frac{\dd\mu(\sigma)}{x+\sigma}
\sim\sum_{n\ge0}(-1)^n\frac{a_n}{x^{n+1}},\qquad x>0.
\end{equation}
We call such a sequence \emph{Stieltjes-admissible}.

If $t+\operatorname{supp}\mu\subset[0,\infty)$, in particular if $t\ge0$, then
$f(x+t)$ is again a Stieltjes transform in the variable $x$, with shifted moments
$R_n(t)=\int(t+\sigma)^n\,\dd\mu(\sigma)$, and all bounds below remain valid after
replacing $a_n$ by $R_n(t)$. For $t<0$ the shifted sequence may cease to be a Stieltjes
moment sequence; the shift is thus one-sided here, in contrast to the Gaussian regime,
where $t$ is unrestricted.

\begin{theorem}[The bounding staircase]\label{thm:staircase}
Let $(a_n)$ be Stieltjes-admissible and $a_0>0$. Put
\[
F(z):=\frac{f(1/z)}{z}=\int_0^\infty\frac{\dd\mu(\sigma)}{1+\sigma z}
\sim\sum_{n\ge0}(-1)^na_nz^n,
\]
and let $[p/q]_F$ denote its Pad\'e approximant at $z=0$. For $z>0$ and every level $m$
at which the approximants are nondegenerate,
\begin{equation}\label{eq:pade-staircase}
[m-1/m]_F(z)\le F(z)\le[m/m]_F(z),
\end{equation}
with the monotone ordering
\[
[0/1]_F\le[1/2]_F\le\cdots\le F
\le\cdots\le[2/2]_F\le[1/1]_F.
\]
All levels are nondegenerate when $\operatorname{supp}\mu$ is infinite; if $\mu$ has
exactly $N$ support points, the staircase terminates at level $N$, where
$F=[N-1/N]_F=[N/N]_F$ exactly (equivalently $f(x)=z\,[N/N]_F(z)$ at $z=1/x$), and the
higher approximants are read in their reduced (defective) form. The bracket at level $m$
uses $a_0,\dots,a_{2m}$; after multiplication by $z=1/x$ its width is $O(x^{-2m-1})$. At
the first level, if $a_1>0$,
\begin{equation}\label{eq:bound}
\frac{a_0}{x+a_1/a_0}\le f(x)\le
\frac{a_0x+(a_0a_2-a_1^2)/a_1}{x^2+(a_2/a_1)x}.
\end{equation}
The inequalities are strict unless the corresponding quadrature is exact; in particular,
for a point mass at $\sigma_0>0$ both are equalities, while for a point mass at the
origin $a_1=0$ and only the lower bound applies (the upper formula assumes $a_1>0$).
\end{theorem}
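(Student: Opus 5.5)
The route is the classical one through orthogonal polynomials and Gauss quadrature for the measure $\mu$, specialised to the moments $a_n$ of \eqref{eq:moment-kernel}. First I will identify the diagonal and subdiagonal Padé approximants of $F$ with quadrature sums. Let $p_m$ be the monic orthogonal polynomials for $\mu$, which exist up to degree $N$ when $\mu$ has $N$ support points and for all degrees otherwise. These Hankel determinants are positive by the Stieltjes property recalled before the theorem.

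For the subdiagonal approximant I will use the $m$-point Gauss rule, with nodes $\sigma_j>0$ at the zeros of $p_m$ and positive weights $w_j$. The candidate is $[m-1/m]_F(z)=\sum_j w_j/(1+\sigma_j z)$. Its expansion in $z$ reproduces $a_0,\dots,a_{2m-1}$, because Gauss is exact to degree $2m-1$. It is a rational function of type $(m-1,m)$, so by uniqueness of Padé approximants it is $[m-1/m]_F$.

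For the diagonal approximant I will use the Gauss--Radau rule with a fixed node at $\sigma=0$ and $m$ free nodes. The free nodes are the zeros of the orthogonal polynomials for $\sigma\,\dd\mu$, and the rule is exact to degree $2m$. This gives $[m/m]_F(z)=w_0+\sum_j w_j'/(1+\sigma_j' z)$, which matches $a_0,\dots,a_{2m}$ and has the right type. This accounts for ``uses $a_0,\dots,a_{2m}$''.

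Next I will obtain the inequalities from the quadrature error for the integrand $g_z(\sigma)=1/(1+\sigma z)$ with $z>0$. The derivatives are $g_z^{(k)}(\sigma)=(-1)^k k!\,z^k(1+\sigma z)^{-k-1}$.

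For Gauss, the error is
\[
F-[m-1/m]_F=\frac{g^{(2m)}(\xi)}{(2m)!}\int p_m^2\,\dd\mu,
\]
and this is $\ge0$ because the even derivative is positive.

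For Radau at the left endpoint $0$, the error is
\[
\frac{g^{(2m+1)}(\xi)}{(2m+1)!}\int\sigma\,q_m^2\,\dd\mu,
\]
which is $\le0$ because the odd derivative is negative. This gives \eqref{eq:pade-staircase}.

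A cleaner route avoids the mean-value forms. Write $g_z$ minus its Hermite interpolant at the nodes as an explicit product, e.g. $g_z(\sigma)-H(\sigma)=z^{2m}\,p_m(\sigma)^2/\bigl(p_m(-1/z)^2(1+\sigma z)\bigr)$, whose sign is visible termwise. I would try this exact form first, since it also settles strictness: equality holds iff $\mu$ is carried by the nodes. The error is then zero at level $N$ for $N$-point support. There $F$ is itself rational of type $(N-1,N)$, so the higher approximants collapse to it in reduced form.

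Monotonicity is the step I expect to be the main obstacle, since the quadratures at different levels are not nested. I will prove it by applying the same error argument to the approximants themselves. The rule $[m/m]_F$ represents a discrete positive measure $\mu_m$ that shares $a_0,\dots,a_{2m}$ with $\mu$. The level-$(m-1)$ upper Radau rule depends only on $a_0,\dots,a_{2m-2}$, so it is the same for $\mu$ and for $\mu_m$. Applying the Radau inequality to $\mu_m$ then gives $[m/m]_F\le[m-1/m-1]_F$. The lower chain follows identically with Gauss rules, since $[m/m+1]$ is built from a measure sharing $a_0,\dots,a_{2m+1}$.

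The width estimate is the standard Padé fact: the two approximants agree through order $z^{2m}$, so their difference is $O(z^{2m+1})$. Multiplying by $z=1/x$ gives $O(x^{-2m-1})$.

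Finally, level one is a direct computation:
\[
[0/1]_F=\frac{a_0}{1+(a_1/a_0)z},\qquad
[1/1]_F=\frac{a_0+(a_0a_2-a_1^2)z/a_1}{1+(a_2/a_1)z}.
\]
Setting $z=1/x$ and multiplying by $z$ gives \eqref{eq:bound}. For a point mass at $\sigma_0>0$, both rules are exact. For a point mass at $0$ we have $a_1=0$, so the Radau formula degenerates, which explains the stated caveat.
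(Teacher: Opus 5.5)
Your proposal is correct, and it takes a genuinely different and more complete route than the paper. The paper does not prove the bracketing or the monotone ordering itself. It cites them as classical properties of Stieltjes series (Baker, Akhiezer) and verifies only level one by hand: the lower bound by Jensen's inequality for the convex map $\sigma\mapsto(x+\sigma)^{-1}$, and the upper bound by matching $a_0,a_1,a_2$ to identify $z[1/1]_F$.

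You derive everything from quadrature. The subdiagonal approximants are $m$-point Gauss rules, and the diagonal ones are Gauss--Radau rules with fixed node $0$; the paper only mentions the latter identification in the discussion after the theorem. The inequalities then come from the sign of the Hermite interpolation remainder of $g_z(\sigma)=1/(1+\sigma z)$. The paper's Jensen step is exactly your $m=1$ Gauss case: one node at $a_1/a_0$, with the sign of $g_z''$.

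Your monotonicity device is sound: you apply the lower-level inequality to the discrete positive measure realised by the higher-level rule. This works because the lower-level rule depends only on moments that this discrete measure shares with $\mu$, and the discrete measure has enough support points for that rule to exist.

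Your route gives self-containedness and an exact error representation. That representation actually proves the strictness clause (equality iff $\mu$ is carried by the nodes) and the termination at level $N$, both of which the paper only asserts. The paper's route gains brevity by leaving the general levels to the literature.

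Two slips need fixing.

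\emph{The explicit remainder.} The factor $z^{2m}$ is spurious. With $\omega=p_m^2$ one has $g_z(\sigma)-H(\sigma)=p_m(\sigma)^2/\bigl(p_m(-1/z)^2(1+\sigma z)\bigr)$, equivalently $z^{2m}p_m(\sigma)^2/\bigl(\prod_j(1+\sigma_jz)^2(1+\sigma z)\bigr)$. The sign argument is unaffected.

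\emph{The width step.} This is off by one, twice. $[m-1/m]_F$ matches only $a_0,\dots,a_{2m-1}$, and its $z^{2m}$ coefficient is $\sum_jw_j\sigma_j^{2m}=a_{2m}-\int p_m^2\,\dd\mu\ne a_{2m}$. So the two approximants agree only through $z^{2m-1}$ and differ at exact order $z^{2m}$, as the paper states. Multiplying by $z=1/x$ then gives $O(x^{-2m-1})$. Read literally, your claim that the difference is $O(z^{2m+1})$ would give $O(x^{-2m-2})$, which is false.

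A smaller point: the fact recalled before the theorem is only positive semidefiniteness of the Hankel matrices. The strict positivity you invoke holds only up to the number of support points of $\mu$ (of $\sigma\,\dd\mu$ for the Radau rules), and that is exactly what decides nondegeneracy at each level.
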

\begin{proof}
The Pad\'e bracketing and monotonicity in \eqref{eq:pade-staircase} are classical
properties of Stieltjes series \cite{Baker,Akhiezer}. The two approximants agree
through degree $2m-1$ in $z$, which gives the stated order of their difference
after multiplication by $z$.

For the first lower bound, normalise $\mu$ to a probability measure and apply
Jensen's inequality to the convex function $\sigma\mapsto(x+\sigma)^{-1}$:
\[
f(x)\ge\frac{a_0}{x+a_1/a_0},
\]
which is $z[0/1]_F(z)$ at $z=1/x$. The upper bound is $z[1/1]_F(z)$ at $z=1/x$; matching
the coefficients $a_0,a_1,a_2$ gives exactly the rational function on the right of
\eqref{eq:bound}. Both sides of the bracket are thus Pad\'e convergents.
\end{proof}

The same moment functional generates the formal $J$-fraction
\begin{equation}\label{eq:Jfraction}
\cfrac{a_0}{x+\b_0-\cfrac{\g_1}{x+\b_1-\cfrac{\g_2}{x+\b_2-\cdots}}},
\qquad \b_n\ge0,\quad \g_n>0,
\end{equation}
whose successive convergents are the \emph{subdiagonal} sequence $z\,[m-1/m]_F(z)$
($z=1/x$) --- the lower members of \eqref{eq:pade-staircase}. The companion upper,
diagonal sequence $z\,[m/m]_F(z)$ is furnished by the associated Gauss--Radau
approximants (equivalently, by the corresponding Stieltjes $S$-fraction), not by the
convergents of \eqref{eq:Jfraction}. Here $\g_n>0$ expresses the positive-definiteness of
the moment functional and $\b_n\ge0$ the confinement of $\operatorname{supp}\mu$ to
$[0,\infty)$ \cite{Akhiezer,Wall}; for a measure with finite support of $N$ points the
fraction terminates ($\g_n>0$ only for $n<N$), so $\g_n>0$ for all $n$ requires infinite
support. The two sequences converge to a common limit, which is then $f$, precisely
when the Stieltjes moment problem is \emph{determinate}; in this case the moments
determine $\mu$. This holds automatically for compactly supported $\mu$, and under
Carleman's condition
$\sum_n a_n^{-1/(2n)}=\infty$ in general \cite{Akhiezer}. When it is \emph{indeterminate},
distinct measures may share every moment, hence every $\g_n,\b_n$, while their transforms
differ; the bracketing convergents of \eqref{eq:pade-staircase} still trap each such $f$
but no longer single out the measure. The three examples below are all determinate
(compact support for Marchenko--Pastur; Carleman's condition for the exponential and
Poisson laws).

\subsection{Three examples with the same first moments}\label{sec:three-moments}
Because level $m$ uses only the moments up to $a_{2m}$, two measures that agree in
their first moments have the same initial bounds. The following three probability
measures all satisfy
\[
(a_0,a_1,a_2)=(1,1,2).
\]
Consequently, Theorem~\ref{thm:staircase} gives the same first-level bracket for all
three associated Stieltjes transforms:
\begin{equation}\label{eq:common-first-bound}
\frac1{x+1}\le f(x)\le\frac{x+1}{x^2+2x},\qquad x>0.
\end{equation}
The next two moments distinguish the examples.

\paragraph{Example 1: factorial moments.}
Let $S$ have the exponential distribution with mean one,
$\dd\mu(\sigma)=e^{-\sigma}\dd\sigma$ \cite{JohnsonKotzBalakrishnan}. Then
\[
a_n=\E[S^n]=n!,\qquad
(a_0,\dots,a_4)=(1,1,2,6,24),
\]
and the corresponding Stieltjes transform is
\[
f_{\rm Exp}(x)=\int_0^\infty\frac{e^{-\sigma}}{x+\sigma}\,\dd\sigma
=e^xE_1(x),\qquad
E_1(x):=\int_x^\infty\frac{e^{-u}}u\,\dd u.
\]
The level-two Pad\'e bounds are
\begin{equation}\label{eq:exp-level-two}
\frac{x+3}{x^2+4x+2}
\le e^xE_1(x)\le
\frac{x^2+5x+2}{x(x^2+6x+6)}.
\end{equation}
For instance, at $x=2$,
\[
\frac5{14}=0.35714
<e^2E_1(2)=0.36133
<\frac4{11}=0.36364.
\]

\paragraph{Example 2: Bell moments.}
Let $N\sim\operatorname{Poisson}(1)$. Its moments are the Bell numbers $B_n$:
\[
a_n=\E[N^n]=B_n,\qquad
(a_0,\dots,a_4)=(1,1,2,5,15).
\]
Indeed, Dobi\'nski's formula
$B_n=e^{-1}\sum_{k\ge0}k^n/k!$ identifies the representing measure explicitly
\cite{Rota}. Equivalently,
$\mu=e^{-1}\sum_{k\ge0}\delta_k/k!$, and therefore
\[
f_{\rm Bell}(x)
=e^{-1}\sum_{k=0}^\infty\frac1{k!(x+k)}.
\]
The level-two bounds become
\begin{equation}\label{eq:bell-level-two}
\frac{x+2}{x^2+3x+1}
\le f_{\rm Bell}(x)\le
\frac{x^2+4x+2}{x(x^2+5x+5)}.
\end{equation}
At $x=2$ the middle value simplifies to $f_{\rm Bell}(2)=e^{-1}$, so
\[
\frac4{11}=0.36364
<e^{-1}=0.36788
<\frac7{19}=0.36842.
\]
This example also makes admissibility immediate: the Bell numbers are moments of
a positive measure, rather than merely a combinatorial sequence.

\paragraph{Example 3: Catalan moments.}
Let
\[
C_n=\frac1{n+1}\binom{2n}{n}
\]
be the Catalan numbers. They are the moments of the Marchenko--Pastur probability
measure with parameter one on $[0,4]$ \cite{MarchenkoPastur,NicaSpeicher},
\[
\dd\mu(\sigma)=\frac1{2\pi}\sqrt{\frac{4-\sigma}{\sigma}}\,
\mathbf1_{(0,4)}(\sigma)\,\dd\sigma.
\]
Thus
\[
(a_0,\dots,a_4)=(1,1,2,5,14),
\qquad
f_{\rm Cat}(x)
=\int_0^4\frac{\dd\mu(\sigma)}{x+\sigma}
=\frac{\sqrt{1+4/x}-1}{2}.
\]
Its level-two bounds are
\begin{equation}\label{eq:catalan-level-two}
\frac{x+2}{x^2+3x+1}
\le f_{\rm Cat}(x)\le
\frac{x^2+3x+1}{x(x+1)(x+3)}.
\end{equation}
At $x=2$ this reads
\[
\frac4{11}=0.36364
<\frac{\sqrt3-1}{2}=0.36603
<\frac{11}{30}=0.36667.
\]

The comparison shows how the staircase separates the three transforms moment by moment.
The common moments $a_0,a_1,a_2$ give the common bracket
\eqref{eq:common-first-bound}. The factorial sequence has $a_3=6$, whereas the
Bell and Catalan sequences have $a_3=5$; hence its lower level-two bound is
different. Bell and Catalan still share that lower bound, but their values
$a_4=15$ and $a_4=14$ produce different upper level-two bounds. Since Bell and Catalan
agree through $a_3$, their transforms first separate at order $x^{-5}$, the width of the
level-two bracket.

\section{Concluding remarks}
The shift theorem can be extended in two directions. The Appell sequence may be
changed, as in \S\S\ref{sec:mills}--\ref{sec:stieltjes}, or the sequence may be kept
fixed while the shift depends on the asymptotic variable, as in
Corollary~\ref{cor:moving}. The second case explains the occurrence of Bernoulli
polynomials at a non-convergent shift in the asymptotic expansion of De~Moivre's
mean absolute deviation \cite{ElezovicMAD}. A combination of a non-Bernoulli sequence
with a moving shift gives a further possible direction.

The Borel--Laplace representation \eqref{eq:laplace} provides a common analytic
framework for two developments beyond gamma-related functions. For the Gaussian
kernel, it gives the shifted Mills expansion with an explicit remainder estimate.
The Hermite form of the coefficients leads to finite-difference formulas and to the
large-threshold expansion of a non-central Gaussian tail. For Stieltjes kernels, the
same generating sequence is a moment sequence, and its moments determine successive
Pad\'e bounds. In this way, the exponential generating function $A$, its Borel kernel
$A(-s)$ and the Appell polynomials of the shift connect the two classes.

The formal Appell construction applies to every coefficient sequence. Its Laplace
representation requires suitable growth and convergence conditions. In the
Stieltjes case, determinacy is additionally needed if the measure is to be recovered
from its moments. These conditions separate the formal and analytic parts of the
method.

\bigskip

\noindent\textbf{Author contributions} 
All authors have contributed equally to this manuscript.\\

\noindent\textbf{Data Availability Statement} 
No datasets were generated or analyzed during
the current study.\\

\noindent\textbf{Declarations}\\

\noindent\textbf{Conflict of interest} 
The authors declare no conflict of interest.\\

\noindent\textbf{Acknowledgements} 
This work was supported by the Croatian Science Foundation (HRZZ) under the project UIP-2025-02-8956.

\end{document}